\theoremstyle{plain}
\newtheorem{thm}{Theorem} 
\newtheorem*{thmn}{Theorem} 
\numberwithin{thm}{section}
\newtheorem*{lemn}{Lemma}
\newtheorem{lem}[thm]{Lemma}
\newtheorem{cor}[thm]{Corollary} 
\newtheorem{prop}[thm]{Proposition}
\theoremstyle{definition}
\newtheorem{defn}[thm]{Definition}
\theoremstyle{remark}
\newtheorem{rem}[thm]{Remark}
\newtheorem{ex}{Example}
\title{Nondegenerate $2 \times k \times (k+1)$ Hypermatrices}
\author{Colin Aitken}
\begin{document}
\maketitle
\begin{abstract}
We construct an extension of Gaussian elimination to show that if $\mathbb{F}$ is a topological field, then there is a transitive, free, and continuous action of a natural quotient of $GL_k(\mathbb{F}) \times GL_{k+1}(\mathbb{F})$ on the set $M_k(\mathbb{F})$ of $2 \times k \times (k+1)$ hypermatrices over $\mathbb{F}$ with nonzero hyperdeterminant. 

We use this action to answer a number of questions including determining the homotopy groups of $M_k(\mathbb{C})$, counting elements of $M_k(\mathbb{F}_q)$ (generalizing an unpublished result of Lewis and Sam), and computing hyperdeterminants for $2 \times k \times (k+1)$ hypermatrices in $O(k^4)$ time, which we use to compute explicit formulas in some special cases.

\end{abstract}
\section{Introduction}
A hypermatrix of format $(k_1 + 1) \times (k_2 + 1) \times \cdots \times (k_r + 1)$ over some field $\mathbb{F}$ is an $r$-dimensional array of elements $a_{i_1,\cdots,i_r}$ of $\mathbb{F}$ with $0 \leq i_j \leq k_j$. This can be viewed as an element of the tensor product \[ \mathbb{F}^{(k_1 + 1)} \otimes \cdots \otimes \mathbb{F}^{(k_r + 1)},\] and as such comes with a natural action of the  product of the linear groups $GL_{k_1 + 1}(\mathbb{F}) \times \cdots \times GL_{k_r + 1}(\mathbb{F})$. Whereas the $GL$ actions for matrices correspond to row operations, they here correspond to \emph{slice} operations, where a slice is an $(r-1)$ dimensional subarray of the hypermatrix.

The hyperdeterminant was originally defined by Cayley in \cite{Cayley} and rediscovered by Gelfand, Kapranov, and Zelevinsky in \cite{GKZ}.
\begin{defn}The hyperdeterminant $\operatorname{Det}$ of format $(k_1 + 1) \times \cdots \times (k_r + 1)$ is the unique irreducible relatively $GL$-invariant\footnote{By ``relatively $GL$-invariant'', we mean that for any $1 \leq i \leq r$, there is an integer $l_i$ such that for any element $g \in GL_{k_i + 1}$ and $(k_1 + 1) \times \cdots \times (k_r + 1)$ hypermatrix $M$, we have $\operatorname{Det}(g \cdot M) = \det(g)^{l_i}\operatorname{Det}(M)$.}  polynomial which is zero if and only if there is a solution over $\overline{\mathbb{F}}$ to
\[
f(x) = \dfrac{\partial f(x)}{\partial x_i^{(j)}} = 0 
\]
with $x^{(j)} \neq 0$ where $f$ is the multilinear form defined by:
\[
f(x^{(1)},x^{(2)},\cdots,x^{(r)}) = \sum_{i_1,\cdots,i_r} a_{i_1,\cdots,i_r}x_{i_1}^{(1)}\cdots x_{i_r}^{(r)}
\]
for the hypermatrix $a_{i_1,\cdots,i_r}$. A hypermatrix with zero hyperdeterminant is called \emph{degenerate.}
\end{defn}
In general hyperdeterminants are extremely difficult to study---in particular, it was shown by Hillar and Lim in \cite{Tensors} that even the question of whether a given hypermatrix has nonzero hyperdeterminant is NP-Hard. However, the \emph{boundary format} in which $k_r = k_1 + k_2 + \cdots + k_{r-1}$ is shown in \cite{GKZ} to be much simpler. In particular, a boundary format hypermatrix is nondegenerate if and only if there is a nontrivial solution over $\overline{\mathbb{F}}$ to
\[
f_0(x) = f_1(x) = \cdots = f_{k_r}(x) = 0,
\]
where
\[
f_{i_r} = \sum_{i_1,\cdots,i_{r-1}} a_{i_1,i_2,\cdots,i_r}x_{i_1}^{(1)} \cdots x_{i_r}^{(r)}.
\]

We can represent a $2 \times k \times (k+1)$ hypermatrix as two $k \times (k+1)$ matrices. In this case, the $GL_k$ and $GL_{k+1}$ actions act by simultaneous row and column operations on the two matrices.  In this paper, we will first prove that all $2 \times k \times (k+1)$ nondegenerate hypermatrices fall into a single orbit under the $GL_k(\mathbb{F}) \times GL_{k+1}(\mathbb{F})$ action. This is perhaps surprising in light of Belitskii and Sergeichukk's Theorem $4.4$ of \cite{BS}, which implies that there are infinitely many $GL_2(\mathbb{C}) \times GL_k(\mathbb{C}) \times GL_{k+1}(\mathbb{C})$ orbits of $\mathbb{C}^2 \otimes \mathbb{C}^k \otimes \mathbb{C}^{k+1}$ for $k \geq 4$. In the remaining sections, we will use this theorem to determine the number of such hypermatrices over finite fields, understand the topology of spaces of nondegenerate hypermatrices over $\mathbb{R}$ and $\mathbb{C}$, and compute explicit formulas for hyperdeterminants.

\section{Main Theorem}
We begin by introducing the set of nondegenerate hypermatrices, and the group with which we would like to act on them:
\begin{defn}
Let $M_k(\mathbb{F})$ be the set of all nondegenerate $2 \times k \times (k+1)$ hypermatrices over the field $\mathbb{F}$, and let 
\[
G = GL_k(\mathbb{F}) \times GL_{k+1}(\mathbb{F})/N
\]
where $N$ is the subgroup of $GL_k \times GL_{k+1}$ consisting of ordered pairs $(cI_k,c^{-1}I_{k+1})$ for $c \in \mathbb{F}^\times.$
\end{defn}
We take a quotient of the product of the $GL$'s rather than the $GL$'s themselves in order to guarantee that the action of $G$ on $M_k$ is free. With this in mind, the goal of this section is to prove the following theorem:
\begin{thm}\label{main}
Let $\mathbb{F}$ be a topological field. Then, there is a continuous, free, and transitive action of $G(\mathbb{F})$ on $M_k(\mathbb{F})$
\end{thm}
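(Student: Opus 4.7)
The plan is to prove transitivity by induction on $k$ via an extended Gaussian-elimination normalization, and to read off freeness and continuity along the way. For the base case $k = 1$, a nondegenerate hypermatrix is a pair of row vectors $(A, B) \in \mathbb{F}^2$ making $\binom{A}{B}$ invertible, and any such pair is carried to $\bigl((1,0),(0,1)\bigr)$ by a unique element of $GL_1 \times GL_2$, which descends to a unique element of $G$; this yields both transitivity and freeness.

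For the inductive step, observe that nondegeneracy at $(x,y)=(1,0)$ forces $\mathrm{rank}(A) = k$, so by simultaneous row and column operations we can put $A$ into the form $[I_k \mid 0]$. A direct computation shows that the stabilizer of this form in $GL_k \times GL_{k+1}$ consists of pairs $(g, h)$ where $h$ has the block form $\bigl(\begin{smallmatrix} g^{-1} & 0 \\ v & c \end{smallmatrix}\bigr)$ for some $v \in \mathbb{F}^k$ and $c \in \mathbb{F}^\times$, and its residual action on $B = [B'\mid b]$ is $B' \mapsto gB'g^{-1} + gbv$ and $b \mapsto cgb$. Nondegeneracy at $(x,y) = (0,1)$ forces $b \ne 0$, so we can further normalize $b = e_k$, which restricts $g$ to matrices preserving $e_k$ up to a scalar; the remaining $v$-freedom then lets us kill the last row of $B'$, yielding $B' = \bigl(\begin{smallmatrix} B''' & d' \\ 0 & 0 \end{smallmatrix}\bigr)$ with $B''' \in \mathbb{F}^{(k-1)\times(k-1)}$ and $d' \in \mathbb{F}^{k-1}$.

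The key observation is that the reduced pair $(\tilde A, \tilde B) := \bigl([I_{k-1}\mid 0],\,[B'''\mid d']\bigr)$ is a nondegenerate $2 \times (k-1) \times k$ hypermatrix, and the remaining stabilizer of the partially-normalized $(A, B)$ acts on it in exactly the way the analogous smaller group acts on $2 \times (k-1) \times k$ hypermatrices, after a straightforward reparametrization of the residual parameters. Applying the inductive hypothesis places $(\tilde A, \tilde B)$ into a canonical form, which in turn forces $(A, B)$ into a canonical form such as $\bigl([I_k\mid 0],\,[0\mid I_k]\bigr)$. Freeness then follows because each normalization step strictly reduces the residual symmetry, with the stabilizer of the final canonical form equal to exactly the scalar subgroup $N$; continuity is automatic since the action is polynomial in the matrix entries and descent to the quotient $G$ is continuous. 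The main obstacle is the bookkeeping in the inductive step, in particular checking both that the reduced pair $(\tilde A, \tilde B)$ is genuinely nondegenerate and that the induced action on it really coincides with the full action of the smaller group, so that the inductive hypothesis applies.
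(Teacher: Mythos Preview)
Your approach is the recursive version of the paper's iterative Algorithm~1: both put $A$ into the form $[I_k \mid 0]$ first and then normalize $B$ from the bottom-right corner inward, with your inductive step corresponding to one pass of the paper's outer \texttt{while} loop. For freeness the paper does a direct stabilizer computation at the canonical form $I_{k,k+1}$, whereas you obtain it by induction through the shrinking residual symmetry; either works, and the bookkeeping you flag (that the reduced pair is nondegenerate and that the residual stabilizer surjects onto the smaller group) does go through.

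There is, however, one genuine gap. Your claim that ``nondegeneracy at $(x,y)=(0,1)$ forces $b \ne 0$'' is incorrect as stated: the slice $B = [B' \mid 0]$ has full rank whenever $B'$ is invertible, so looking only at the point $(0,1)$ says nothing about $b$. What actually forces $b \ne 0$ is the full-rank condition on \emph{every} nonzero linear combination over $\overline{\mathbb{F}}$ (Lemma~\ref{degen}): if $b = 0$ then $\lambda A + \mu B = [\lambda I_k + \mu B' \mid 0]$, and choosing $-\lambda/\mu$ to be any eigenvalue of $B'$ in $\overline{\mathbb{F}}$ drops the rank. This is precisely the obstruction the paper isolates in the proof of Lemma~\ref{mainlem}. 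The passage to the algebraic closure is essential here---over $\mathbb{R}$, for instance, $B'$ may have no real eigenvalues and your stated reason yields no contradiction---so you must invoke Lemma~\ref{degen} (equivalently, the eigenvalue argument) at every level of the induction, not merely check rank at the two coordinate points $(1,0)$ and $(0,1)$.
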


This action is induced by the action of $GL_k \times GL_{k+1}$, which also implies its continuity. This means we only need to check that the action is free and transitive. To show that the action is transitive, we will introduce a reduction algorithm using elements of $G(\mathbb{F})$ to reduce every arbitrary nondegenerate hypermatrix to a single hypermatrix:

We will prove Theorem \ref{main} using a series of lemmas. First we will construct a slightly different way of checking nondegeneracy of a hypermatrix, which we will need to show that Algorithm $1$ correctly identifies degenerate hypermatrices.
\begin{lem}
\label{degen}
Let $M$ be a $(k_1 + 1) \times (k_2 + 1) \times (k_1 + k_2 + 1)$ hypermatrix over field $\mathbb{F}$, and denote the $(k_2 + 1) \times (k_1 + k_2 +1)$ slices of $M$ by $M_0, M_1, \cdots, M_{k_1}.$ Then, $M$ is nondegenerate if and only if every linear combination $c_0M_0 + \cdots + c_{k_1 }M_{k_1}$ of the $M_i$'s over $\overline{\mathbb{F}}$ with $c_0,\cdots,c_{k_1}$ not all zero has full rank.
\end{lem}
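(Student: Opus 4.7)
The plan is to translate the rank condition on linear combinations of slices directly into the boundary-format degeneracy criterion recalled in the introduction. Since the format $(k_1+1) \times (k_2+1) \times (k_1+k_2+1)$ is boundary, $M$ is degenerate if and only if there exist nonzero $x^{(1)} \in \overline{\mathbb{F}}^{k_1+1}$ and $x^{(2)} \in \overline{\mathbb{F}}^{k_2+1}$ with $f_{i_3}(x^{(1)}, x^{(2)}) = 0$ for every $i_3 = 0, 1, \ldots, k_1 + k_2$, where $f_{i_3}(x^{(1)},x^{(2)}) = \sum_{i_1, i_2} a_{i_1, i_2, i_3} x_{i_1}^{(1)} x_{i_2}^{(2)}$. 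I would take this criterion as a black box from GKZ and argue that it is equivalent to the rank statement by a direct re-indexing.

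The key observation is the following. For any choice of coefficients $c = (c_0, \ldots, c_{k_1})$, the $(i_2, i_3)$-entry of the linear combination $c_0 M_0 + \cdots + c_{k_1} M_{k_1}$ is $\sum_{i_1} c_{i_1} a_{i_1, i_2, i_3}$. Since this is a $(k_2+1) \times (k_1 + k_2 + 1)$ matrix with more columns than rows, full rank means rank $k_2 + 1$, and the matrix fails to be of full rank exactly when it admits a nonzero left null vector $y = (y_{i_2}) \in \overline{\mathbb{F}}^{k_2+1}$. Writing out that condition yields $\sum_{i_1, i_2} c_{i_1} y_{i_2} a_{i_1, i_2, i_3} = 0$ for every $i_3$; setting $x^{(1)} := c$ and $x^{(2)} := y$, this is precisely the system $f_{i_3}(x^{(1)}, x^{(2)}) = 0$ for all $i_3$, with $x^{(1)}, x^{(2)}$ both nonzero.

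Combining the two observations, the existence of nonzero coefficients $(c_0, \ldots, c_{k_1})$ making $\sum c_{i_1} M_{i_1}$ rank-deficient is equivalent (after relabeling) to the existence of a nonzero pair $(x^{(1)}, x^{(2)})$ satisfying $f_{i_3}(x^{(1)}, x^{(2)}) = 0$ for all $i_3$, which is in turn equivalent to degeneracy of $M$. Taking the contrapositive gives exactly the statement of the lemma. The argument is essentially bookkeeping once the boundary-format criterion is in hand, so I do not anticipate a serious obstacle; the one place where care is needed is matching directions, since the slice rank-deficiency is witnessed by a \emph{left} null vector $y$, and this has to be identified with the second-factor variable $x^{(2)}$ in the $f_{i_3}$ rather than with $x^{(1)}$.
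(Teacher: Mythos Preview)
Your proposal is correct and follows essentially the same route as the paper: both arguments invoke the boundary-format degeneracy criterion and observe that a nonzero common solution to the $f_{i_3}=0$ amounts precisely to a nonzero $c$ for which $\sum c_{i_1}M_{i_1}$ has a nonzero left kernel vector. The paper packages this via the shorthand $(Mv)w=0$ with $Mv=\sum v_iM_i$, whereas you write out the indices explicitly; your remark about matching the left null vector to $x^{(2)}$ is a useful point of care that the paper leaves implicit.
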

\begin{proof}
We recall from \cite{DO} the notion of multiplication of a hypermatrix by a vector, by which we mean the operation of taking linear combinations of slices with coefficients indexed by the vector. For example, when multiplying a $2 \times 3 \times 4$ hypermatrix by a $3$-vector, the result would be  a $2 \times 4$ matrix.

Since $M$ is a boundary format hypermatrix, it is degenerate if and only if there is a nonzero solution to \[
f_0(x) = f_1(x) = \cdots = f_{k_1 + k_2 + 1}(x) = 0
\]

This is equivalent to the existence of a pair of vectors $(v,w) \in \overline{\mathbb{F}}^{k_1 + 1} \times \overline{\mathbb{F}}^{k_2 + 1}$ such that $(Mv)w = 0$, which means that $(Mv)$ has less than full rank. But if $v = (v_1,\cdots,v_{k_1+1})$, then $(Mv)$ is the same as $\sum M_iv_i$, which proves the lemma.
\end{proof}

Next, we show that we can use the $GL_k \times GL_{k+1}$ action to reduce each nondegenerate hypermatrix to a standard form, which will show that the action is transitive. Before presenting the algorithm in full, we will look at the following toy example.

\begin{ex}
Consider the $2 \times 3 \times 4$ hypermatrix
\[
\begin{pmatrix}1 & 0 & 0 \\
0 & 1 & 0 \\
0 & 0 & 1 \\
0 & 0 & 0 \\
\end{pmatrix}
\begin{pmatrix}
1 & 0 & 0 \\
1 & 0 & 0 \\
-3 & 3 & 0 \\
1 & 2 & 1\\
\end{pmatrix}.
\]
We would like to reduce it to the form
\[
\begin{pmatrix}
1 & 0 & 0 \\
0 & 1 & 0 \\
0 & 0 & 1 \\
0 & 0 & 0 \\
\end{pmatrix}
\begin{pmatrix}
0 & 0 & 0 \\
1 & 0 & 0 \\
0 & 1 & 0 \\
0 & 0 & 1\\
\end{pmatrix}
\]
using row and column operations. In doing so, we will exhibit the four basic pieces of our reduction algorithm.
\begin{enumerate}
\item \emph{Clearing nonzero elements of a row.}

The last row of the second slice has a $1$ and a $2$ where there should be zeroes, so we will eliminate them by using the  $GL_3$ action to add multiples of the third column:
\[
\begin{pmatrix}1 & 0 & 0 \\
0 & 1 & 0 \\
-1 & -2 & 1 \\
0 & 0 & 0 \\
\end{pmatrix}
\begin{pmatrix}
0 & 1 & 0 \\
0 & 1 & 0 \\
3 & -3 & 0 \\
0 & 0 & 1\\
\end{pmatrix}.
\]
To ``fix" the problem this created in the first slice, we will use the $GL_4$ action to add multiples of the first and second row to the third row:
\[
\begin{pmatrix}1 & 0 & 0 \\
0 & 1 & 0 \\
0 & 0 & 1 \\
0 & 0 & 0 \\
\end{pmatrix}
\begin{pmatrix}
0 & 1 & 0 \\
0 & 1 & 0 \\
3 & 0 & 0 \\
0 & 0 & 1\\
\end{pmatrix}.
\]
In general, this will only affect elements above the row being cleared.
\item \emph{Making diagonal elements nonzero.}

In the second slice there is a zero on the diagonal where there needs to be a $1.$ We will use the $GL_3$ action to swap the first and second columns:
\[
\begin{pmatrix}0 & 1 & 0 \\
1 & 0 & 0 \\
0 & 0 & 1 \\
0 & 0 & 0 \\
\end{pmatrix}
\begin{pmatrix}
1 & 0 & 0 \\
1 & 0 & 0 \\
0 & 3 & 0 \\
0 & 0 & 1\\
\end{pmatrix}.
\]
To ``fix'' the problem this created in the first slice, we will use the $GL_4$ action to swap the first and second rows:
\[
\begin{pmatrix}1 & 0 & 0 \\
0 & 1 & 0 \\
0 & 0 & 1 \\
0 & 0 & 0 \\
\end{pmatrix}
\begin{pmatrix}
1 & 0 & 0 \\
1 & 0 & 0 \\
0 & 3 & 0 \\
0 & 0 & 1\\
\end{pmatrix}.
\]
In general, this only affects elements above the row being cleared.

\item \emph{Making diagonal elements $1$.}

In the second slice there is a $3$ on the diagonal where there needs to be a $1$. We will use the $GL_4$ action to divide the third row by three:
\[
\begin{pmatrix}1 & 0 & 0 \\
0 & 1 & 0 \\
0 & 0 & 1/3 \\
0 & 0 & 0 \\
\end{pmatrix}
\begin{pmatrix}
1 & 0 & 0 \\
1 & 0 & 0 \\
0 & 1 & 0 \\
0 & 0 & 1\\
\end{pmatrix}.
\]
To ``fix'' the $1/3$ this created in the first slice, we will use the $GL_3$ action to multiply the third column by three:
\[
\begin{pmatrix}1 & 0 & 0 \\
0 & 1 & 0 \\
0 & 0 & 1 \\
0 & 0 & 0 \\
\end{pmatrix}
\begin{pmatrix}
1 & 0 & 0 \\
1 & 0 & 0 \\
0 & 1 & 0 \\
0 & 0 & 3\\
\end{pmatrix}.
\]
Finally, we will use the $GL_4$ action divide the fourth row by three. This will not change in the first slice.
\[
\begin{pmatrix}1 & 0 & 0 \\
0 & 1 & 0 \\
0 & 0 & 1 \\
0 & 0 & 0 \\
\end{pmatrix}
\begin{pmatrix}
1 & 0 & 0 \\
1 & 0 & 0 \\
0 & 1 & 0 \\
0 & 0 & 1\\
\end{pmatrix}.
\]
In general, we can follow this pattern of chasing elements down and to the right to make the diagonal elements $1.$
\item \emph{Clearing nonzero elements of a column.}
The leftmost column of the second slice has an extra $1$, which we can clear by using the $GL_4$ action to subtract the second row from the first.
\[
\begin{pmatrix}1 & -1 & 0 \\
0 & 1 & 0 \\
0 & 0 & 1 \\
0 & 0 & 0 \\
\end{pmatrix}
\begin{pmatrix}
0 & 0 & 0 \\
1 & 0 & 0 \\
0 & 1 & 0 \\
0 & 0 & 1\\
\end{pmatrix}.
\]
To ``fix'' the $-1$ this created in the first slice, we use the $GL_3$ action to add the first column to the second.
\[
\begin{pmatrix}1 & 0 & 0 \\
0 & 1 & 0 \\
0 & 0 & 1 \\
0 & 0 & 0 \\
\end{pmatrix}
\begin{pmatrix}
0 & 0 & 0 \\
1 & 1 & 0 \\
0 & 1 & 0 \\
0 & 0 & 1\\
\end{pmatrix}.
\]
We can continue this pattern to ``chase'' the $1$ down the diagonal until it goes away.
\[
\begin{pmatrix}1 & 0 & 0 \\
0 & 1 & -1 \\
0 & 0 & 1 \\
0 & 0 & 0 \\
\end{pmatrix}
\begin{pmatrix}
0 & 0 & 0 \\
1 & 0 & 0 \\
0 & 1 & 0 \\
0 & 0 & 1\\
\end{pmatrix}
\]
\[
\begin{pmatrix}1 & 0 & 0 \\
0 & 1 & 0 \\
0 & 0 & 1 \\
0 & 0 & 0 \\
\end{pmatrix}
\begin{pmatrix}
0 & 0 & 0 \\
1 & 0 & 0 \\
0 & 1 & 1 \\
0 & 0 & 1\\
\end{pmatrix}
\]
\[
\begin{pmatrix}1 & 0 & 0 \\
0 & 1 & 0 \\
0 & 0 & 1 \\
0 & 0 & 0 \\
\end{pmatrix}
\begin{pmatrix}
0& 0 & 0 \\
1 & 0 & 0 \\
0 & 1 & 0 \\
0 & 0 & 1\\
\end{pmatrix}
\]
\end{enumerate}
\end{ex}
We can extend these four basic moves into an algorithm. To start, we reduce the first slice to the specified form by standard Gaussian elimination. We then begin at the bottom right corner of the second slice and use the second move to make sure it is nonzero, followed by the third move to make sure that it's one. We can then use the first move to clear its row and the last move to clear its column, and then move to the next diagonal element. 

More formally, we get the following algorithm to reduce the second slice:
\begin{algorithm}\caption{Double Gaussian Elimination for $2 \times k \times (k+1)$ Hypermatrices}
\label{alg:model}
\noindent\begin{algorithmic}
\renewcommand\algorithmicdo{}
\renewcommand\algorithmicthen{}
\STATE $j \leftarrow k-1$
\WHILE {$j \geq 0$}
\IF {$a_{1(j+1)j} = 0$}
\FOR {$l \in \{0,1,\cdots,j-1\}$ \hfill \emph{Make diagonal elements nonzero}}
\IF {$a_{1(j+1)l} \neq 0$}
\STATE Swap columns $l$ and $j$
\STATE Swap rows  $l$ and $j$
\STATE \textbf{break} (out of the \emph{for} loop.)
\ENDIF
\ENDFOR
\IF {$a_{1(j+1)j} = 0$}
\STATE Error: ``Hypermatrix is Degenerate''
\ENDIF
\ENDIF
\STATE $c \leftarrow a_{1(j+1)j}$ \hfill \emph{Make diagonal elements $1$}
\STATE Multiply rows $j+1, j+2 \cdots, k$ by $1/c.$ 
\STATE Multiply columns $j+1, j+2, \cdots k-1$ by $c.$ 
\FOR {$\ell \in \{0,1,2,\cdots,j-1\}$ \hfill \emph{Clear the rest of the row}}
\STATE $c \leftarrow a_{1(j+1)\ell}$ 
\STATE Add $-c$ times column $j$ to column $\ell.$
\STATE Add $c$ times row $\ell$ to row $j$.
\ENDFOR
\FOR {$m \in \{j+1,j+2,\cdots,k\}$ \hfill \emph{Clear the rest of the column}}
\FOR {$\ell \in \{0,1,2,\cdots,m-1\}$}
\STATE $c \leftarrow a_{1\ell (m-1)}$
\STATE Add $-c$ times row $m$ to row $\ell$.
\IF {$m < k$}
\STATE Add $c$ times column $\ell$ to column $m$
\ENDIF
\ENDFOR
\ENDFOR
\STATE $j \leftarrow (j-1)$
\ENDWHILE
\end{algorithmic}
\end{algorithm}
\begin{lem}\label{mainlem}
Let $M$ be a nondegenerate $2 \times k \times (k+1)$ hypermatrix whose first $k \times (k+1)$ slice is of the form:
\[
\begin{pmatrix}
1 & 0 & \cdots & 0 \\
0 & 1 & \cdots & 0 \\
\vdots & \vdots & \ddots & \vdots \\
0 & 0 & \cdots & 1 \\
0 & 0 & \cdots & 0
\end{pmatrix}
\]
Then, applying Algorithm $1$ to $M$ will result in the hypermatrix:
\[I_{k,k+1} := 
\begin{pmatrix}
1 & 0 & \cdots & 0 \\
0 & 1 & \cdots & 0 \\
\vdots & \vdots & \ddots & \vdots \\
0 & 0 & \cdots & 1 \\
0 & 0 & \cdots & 0
\end{pmatrix}
\begin{pmatrix}
0 & 0 & \cdots & 0 \\
1 & 0 & \cdots & 0 \\
0 & 1 & \cdots & 0 \\
\vdots & \vdots & \ddots & \vdots \\
0 & 0 & \cdots & 1
\end{pmatrix}
\]
\end{lem}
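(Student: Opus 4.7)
The plan is to prove the lemma by tracking a concrete invariant across the decreasing iterations $j = k-1, k-2, \ldots, 0$ of Algorithm~\ref{alg:model}. Specifically, I will show that at the end of iteration $j$, (i) the first slice remains in the standard form from the hypothesis, and (ii) each column $l \in \{j, j+1, \ldots, k-1\}$ of the second slice has reached its target subdiagonal value $a_{1(i)(l)} = \delta_{i,l+1}$, while columns $0, 1, \ldots, j-1$ may still be arbitrary. At the terminal value $j = 0$, this invariant is exactly the conclusion that the hypermatrix equals $I_{k,k+1}$.

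Verifying that each of the four basic moves preserves the first slice is a short block computation: each $GL_k$ column operation is paired with a $GL_{k+1}$ row operation that exactly cancels its effect on the identity part of the first slice, as illustrated in the toy example. I would then check the invariant step by step within iteration $j$: the swap (a) involves only indices $\leq j$ and cannot disturb columns $j+1, \ldots, k-1$; the scaling (b) restores the previously-cleared subdiagonal entries through the paired $1/c$ row and $c$ column rescalings while simultaneously making $a_{1(j+1)(j)} = 1$; the row-clearing (c) modifies only columns $\leq j-1$ and row $j$, leaving the preserved columns intact. The column-clearing step (d) is the main obstacle, and I would handle it by a secondary induction on the outer loop index $m$, with inductive hypothesis that at the start of the $m$-th outer iteration, rows $m, m+1, \ldots, k$ of the second slice equal the standard vectors $e_{m-1}^T, e_m^T, \ldots, e_{k-1}^T$. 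Because row $m$ has its only nonzero entries in columns $m-1$ and $m$ throughout the outer iteration (with the entry at column $m-1$ always $1$), the inner row operation cleans $a_{1(l)(m-1)}$ to zero without touching columns $\{0, \ldots, m-2\}$ of row $l$, and the accompanying column operation does not affect rows $i > m$ of column $m$, since the target-form structure forces $a_{1(i)(l)} = 0$ for $l \leq m-1$ and $i > m$. Hence rows $m+1, \ldots, k$ persist in target form into outer iteration $m+1$; since outer iteration $m = k$ skips the column operation (its guard $m < k$ fails), the perturbation cascade terminates and all columns $j, \ldots, k-1$ end up in subdiagonal form by the end of step (d).

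Finally, I would show that the error branch in step (a) cannot trigger when $M$ is nondegenerate. The error condition at iteration $j$ forces row $j+1$ of the second slice to vanish in columns $0, \ldots, j$; combined with the invariant (which forces zeros in columns $j+1, \ldots, k-1$), the entire row $j+1$ must be zero. Writing $\tilde{B}$ for the top-left $(j+1) \times (j+1)$ submatrix $(a_{1(i)(l)})_{0 \leq i,l \leq j}$ and picking any eigenvalue $\mu \in \overline{\mathbb{F}}$ of $\tilde{B}$ with eigenvector $\tilde{v}$, a direct row-by-row verification using the subdiagonal form of rows $j+2, \ldots, k$ and the vanishing of row $j+1$ shows that the vector $(\tilde{v}, 0, \ldots, 0) \in \overline{\mathbb{F}}^k$ lies in the kernel of the second slice minus $\mu$ times the first slice. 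By Lemma~\ref{degen} applied with $(c_0, c_1) = (-\mu, 1)$, this forces $M$ to be degenerate, contradicting the hypothesis. Thus the algorithm runs to completion, and the invariant at $j = 0$ yields $I_{k,k+1}$.
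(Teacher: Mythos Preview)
Your approach is essentially the same as the paper's: verify that each pass of the \emph{while} loop fixes one more row and column of the second slice while preserving the first slice, and for the error branch use an eigenvalue of the upper-left square block to exhibit a rank-deficient combination via Lemma~\ref{degen}. The paper gives only a one-line sketch (``clear by working through the steps'') together with the same eigenvalue argument, so your proposal is in fact a more careful execution of the identical strategy; the only point to tighten is that your stated loop invariant speaks only of columns $j,\ldots,k-1$, whereas your error argument (and your analysis of steps (a) and (c)) also need that rows $j+2,\ldots,k$ already sit in the target form, a fact you do establish inside the step~(d) induction but should promote into the main invariant.
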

\begin{proof}
That the algorithm  does indeed reduce a hypermatrix to $I_{k,k+1}$ if it does not throw an error is clear by working through the steps of the algorithm and noting that each run through the outermost \emph{while} loop fixes one row and one column of the second slice without affecting the first slice or rows and columns which have already been fixed.

The only part we need to prove is that if a row of all zeroes is encountered before reaching the top, then the hypermatrix is degenerate. We proceed by contradiction. Suppose at some point in the algorithm we come across a row of all zeroes, say row $j+1$. This implies that each of the two large slices split as the direct sum of a $2 \times j \times j$ hypermatrix whose first slice is an identity matrix, and a $2 \times (k-j) \times (k+1 - j)$ hypermatrix. Let $A$ be the second slice of the $2 \times j \times j$ hypermatrix, and let $\lambda$ be an eigenvalue of $A$. Then $(\lambda(I) - A)$ has less than full rank, so by Lemma \ref{degen} the hypermatrix is indeed degenerate.
\end{proof}
Finally, we are in a position to prove the main theorem of this section.
\begin{proof}[Proof of Theorem \ref{main}]
Lemma \ref{mainlem} implies that the action is transitive because we can use row operations to reduce the first slice, then Algorithm $1$ to reduce the second slice, which implies that all elements of $M_k(\mathbb{F})$ lie in the same orbit as $I_{k,k+1}$ and therefore the same orbit as each other. Therefore, it only remains to check that the group action is free. Let $A \in GL_{k}$ and $B \in GL_{k+1}$. We want to show that if $(A,B) \cdot I_{k,k+1} = I_{k,k+1}$, then $A = cI_k$ and $B = c^{-1}I_{k+1}$ for some $c.$ 

Let $A = (a_{ij})$ and $B = (b_{ij})$ such that $(A,B) \cdot I_{k,k+1} = I_{k,k+1}.$ Expanding the first slice implies that $a_{k+1,1} = a_{k+1,2} = \cdots = a_{k+1,k} = 0$ and that
\[
\begin{pmatrix}
a_{11} & a_{12} & \cdots & a_{1k} \\
a_{21} & a_{22} & \cdots & a_{2k} \\
\vdots & \vdots & \ddots & \vdots \\
a_{k1} & a_{k2} & \cdots & a_{kk}
\end{pmatrix} = B^{-1},
\]while expanding the second slice implies that $a_{12} = a_{13} = \cdots = a_{1,k+1} = 0$ and that 
\[
\begin{pmatrix}
a_{22} & a_{23} & \cdots & a_{2,k+1} \\
a_{32} & a_{33} & \cdots & a_{3,k+1} \\
\vdots & \vdots & \ddots & \vdots \\
a_{k+1,2} & a_{k+1,3} & \cdots & a_{k+1,k+1}
\end{pmatrix} = B^{-1}
\]
This implies that
\[
\begin{pmatrix}
a_{22} & a_{23} & \cdots & a_{2,k+1} \\
a_{32} & a_{33} & \cdots & a_{3,k+1} \\
\vdots & \vdots & \ddots & \vdots \\
0 & 0 & \cdots & a_{k+1,k+1}
\end{pmatrix} = \begin{pmatrix}
a_{11} & 0 & \cdots &0 \\
a_{21} & a_{22} & \cdots & a_{2k} \\
\vdots & \vdots & \ddots & \vdots \\
a_{k1} & a_{k2} & \cdots & a_{kk}
\end{pmatrix},
\]
and so $a_{11} = a_{22} = \cdots a_{k+1,k+1}$. Looking at the top rows implies that $a_{23} = \cdots = a_{2,{k+1}} = 0$, which implies $a_{34} = \cdots = a_{3,k+1} = 0$, and so on. Continuing via induction, we see that $a_{ij} = 0$ for $i \neq j.$ Therefore $A = a_{11}I_{k+1}$, so $B = a_{11}^{-1}I_k$, so $(A,B)$ lies in the subgroup $N$ that was quotiented out and the action is indeed free. 
\end{proof}
\begin{cor} \label{homeo}
If $\mathbb{F}$ is Hausdorff, then there exists a homeomorphism $\phi: G \to M_k(\mathbb{F})$.
\end{cor}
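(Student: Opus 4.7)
The natural candidate is the orbit map $\phi: G \to M_k(\mathbb{F})$ defined by $\phi(\overline{(A,B)}) = (A,B)\cdot I_{k,k+1}$, where $I_{k,k+1}$ is the standard form from Lemma \ref{mainlem}. It is well-defined ($N$ stabilizes $I_{k,k+1}$), continuous (from continuity of the $G$-action), and bijective by Theorem \ref{main}. Since $\phi$ is $G$-equivariant, showing it is a homeomorphism reduces to producing a continuous local inverse on some open neighborhood $U$ of the basepoint $I_{k,k+1}$.

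To build such a local inverse I plan to run the two-phase reduction sketched in the paper --- first standard Gaussian elimination on the first slice to put it in the form of Lemma \ref{mainlem}, and then Algorithm 1 on the second slice --- and observe that both phases become pivot-free, and therefore rational in the entries of the input, whenever $M$ is sufficiently close to $I_{k,k+1}$. The key observation is that applying the algorithm to $I_{k,k+1}$ itself is a no-op: the top $k\times k$ block of the first slice is already $I_k$ and the bottom row is already $0$, so Phase A does nothing, and in Phase B each scalar $c$ extracted by Algorithm 1 is either $0$ or $1$, so every ``add $c$ times...'' step has no effect and every scaling is by $1$. By continuity, for $M$ close enough to $I_{k,k+1}$ the top $k\times k$ block of the first slice remains invertible (so Phase A needs no row or column swaps) and each pivot $a_{1(j+1)j}$ inspected by Algorithm 1 remains close to $1$ (so the swap branch never fires). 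Every remaining operation in both phases (scaling, adding multiples of rows or columns) is a rational function of the current entries, so the assembled assignment $M \mapsto g(M) \in GL_k(\mathbb{F})\times GL_{k+1}(\mathbb{F})$ with $g(M)\cdot M = I_{k,k+1}$ is continuous on $U$, and composing with the quotient to $G$ yields a continuous local inverse $\phi^{-1}(M) = g(M)^{-1}$. Equivariance propagates this to continuity of $\phi^{-1}$ on all of $M_k(\mathbb{F})$: for any $N \in M_k(\mathbb{F})$, pick $h \in G$ with $h \cdot N = I_{k,k+1}$, and express $\phi^{-1}$ on the open neighborhood $h^{-1}\cdot U$ of $N$ by $M \mapsto h^{-1}\cdot g(h\cdot M)^{-1}$.

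The Hausdorff hypothesis on $\mathbb{F}$ is used only to make ``homeomorphism'' the right notion: it ensures $M_k(\mathbb{F})$ is Hausdorff (as an open subspace of $\mathbb{F}^{2k(k+1)}$) and that $N$ is closed in $GL_k(\mathbb{F})\times GL_{k+1}(\mathbb{F})$ (being cut out by closed conditions that certain off-diagonal entries vanish), so that the quotient $G$ is Hausdorff. The main obstacle is the bookkeeping step verifying that all tested pivots remain nonzero on a single neighborhood of $I_{k,k+1}$; this rests on the ``algorithm is a no-op at $I_{k,k+1}$'' observation, together with careful tracking of how the four basic moves propagate perturbations through the iterative structure.
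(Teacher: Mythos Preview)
Your argument is correct and is in fact more careful than the paper's own proof. The paper simply writes: ``Pick an arbitrary $x \in M_k(\mathbb{F})$, and let $\phi(g) = g\cdot x$. The above theorem implies that $\phi$ is a continuous bijection, and the fact that $M_k(\mathbb{F})$ is Hausdorff implies $\phi$ is a homeomorphism.'' But a continuous bijection onto a Hausdorff space is not in general a homeomorphism, so as stated this step is incomplete; some openness or local-section argument is needed, and the paper does not supply one.

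Your approach --- observing that the two-phase reduction (Gaussian elimination on the first slice, then Algorithm~1 on the second) runs in its pivot-free branch throughout a neighborhood of $I_{k,k+1}$, so that the accumulated group element $g(M)$ is a rational and hence continuous function of the entries of $M$, and then transporting this local inverse everywhere by $G$-equivariance --- is exactly what fills that gap. It has the further virtue of working uniformly over any Hausdorff topological field, with no appeal to Baire category, local compactness, or Lie-theoretic open mapping theorems. One small addendum: the Hausdorff hypothesis is doing slightly more than you list. Beyond making $M_k(\mathbb{F})$ Hausdorff and $N$ closed, it is also what guarantees that $\{0\}$ is closed in $\mathbb{F}$, hence that your nonvanishing conditions (invertibility of the top $k\times k$ block, nonzeroness of each pivot $a_{1(j+1)j}$) cut out \emph{open} sets --- this is precisely what makes your neighborhood $U$ exist.
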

\begin{proof}
Pick an arbitrary $x \in M_k(\mathbb{F})$, and let $\phi(g) = g\cdot x$. The above theorem implies that $\phi$ is a continuous bijection, and the fact that $M_k(\mathbb{F})$ is Hausdorff implies $\phi$ is a homeomorphism. 
\end{proof}
\section{Some Consequences}
\subsection{Counting}
We will denote by $[n]_q$ the sum:
\[
[n]_q = 1 + q + \cdots + q^{n-1}
\]
and by $[n]!_q$ the product:
\[
[n]!_q = [1]_q[2]_q\cdots[n]_q.
\]
Then, the formula
\[
|GL_n(\mathbb{F}_q)| = q^{\binom{n}{2}}(q-1)^n[n]!_q
\]
is well-known, and indeed motivates viewing invertible matrices as a $q$-analogue of permutations, as in Section $1.10$ of \cite{Stanley}. This leads one to consider other sets of nondegenerate hypermatrices and their sizes over finite fields. Nondegenerate hypermatrices of format $2 \times 2 \times 2$ over $\mathbb{F}_q$ have been counted in an unpublished manuscript of Musiker and Yu\cite{myu}.
\begin{thmn}[Musiker-Yu]
The number of nondegenerate $2 \times 2 \times 2$ hypermatrices over $\mathbb{F}_q$ is $q^3(q-1)^2[4]_q$
\end{thmn}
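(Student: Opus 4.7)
The plan is to identify a $2 \times 2 \times 2$ hypermatrix $T$ with the pencil of $2 \times 2$ matrices given by its pair of first-axis slices $(M_0, M_1)$. Under this identification, the Cayley hyperdeterminant $\operatorname{Det}(T)$ agrees, up to sign, with the discriminant of the binary form $\det(\lambda M_0 + \mu M_1) \in \mathbb{F}_q[\lambda, \mu]$. This gives the analogue of Lemma~\ref{degen} in the present (non-boundary) format: $T$ is nondegenerate if and only if $\det(\lambda M_0 + \mu M_1)$ is a nonzero polynomial whose two roots in $\mathbb{P}^1(\overline{\mathbb{F}_q})$ are distinct.

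Next I would stratify nondegenerate hypermatrices according to the Galois-orbit type of their unordered pair $\{P_1, P_2\} \subset \mathbb{P}^1(\overline{\mathbb{F}_q})$ of singular points. A short Galois-orbit count shows that there are exactly $q^2$ such stable pairs of distinct points: the \emph{split} case (both points in $\mathbb{P}^1(\mathbb{F}_q)$) contributes $\binom{q+1}{2}$, and the \emph{non-split} case (a conjugate pair drawn from $\mathbb{P}^1(\mathbb{F}_{q^2}) \setminus \mathbb{P}^1(\mathbb{F}_q)$) contributes $(q^2 - q)/2$. Since the $GL_2(\mathbb{F}_q)$ action on the first index permutes pencils and is transitive on each of these two strata, it suffices to count pencils with one fixed canonical pair in each case and multiply.

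For the canonical split pair $\{[1:0], [0:1]\}$, the singularity conditions force $M_0$ and $M_1$ each to have rank exactly $1$, while nondegeneracy forces the pencil to have generic rank $2$. Writing $M_0 = u_0 v_0^{\top}$ and $M_1 = u_1 v_1^{\top}$ with both $\{u_0, u_1\}$ and $\{v_0, v_1\}$ linearly independent, and accounting for the rescaling $(u_i, v_i) \sim (c u_i, c^{-1} v_i)$, gives $q^2 (q-1)^2 (q+1)^2$ ordered pairs. For the canonical non-split pair, both $M_0$ and $M_1$ are forced to lie in $GL_2(\mathbb{F}_q)$ and $A := M_0^{-1} M_1$ is forced to have a prescribed irreducible characteristic polynomial over $\mathbb{F}_q$; parametrizing by $(M_0, A) \in GL_2(\mathbb{F}_q) \times C$, where $C$ is the $GL_2(\mathbb{F}_q)$-conjugacy class of such $A$ (of size $q(q-1)$, since the centralizer is $\mathbb{F}_{q^2}^{\times}$), yields $q^2 (q-1)^3 (q+1)$ ordered pairs.

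Assembling the two strata gives
\[
\tfrac{q(q+1)}{2} \cdot q^2 (q-1)^2 (q+1)^2 \;+\; \tfrac{q(q-1)}{2} \cdot q^2 (q-1)^3 (q+1) \;=\; q^3 (q-1)^2 (q+1)(q^2+1),
\]
which equals $q^3 (q-1)^2 [4]_q$ since $[4]_q = (q+1)(q^2+1)$. The main obstacle I anticipate is the careful bookkeeping across strata: verifying the transitivity of $GL_2(\mathbb{F}_q)$ on Galois-stable pairs of each type, and justifying that the two local parametrizations (rank-$1$ slices in the split case, a fixed non-split conjugacy class of $M_0^{-1}M_1$ in the other) together enumerate each nondegenerate hypermatrix exactly once, with no overlap or omission between the two strata.
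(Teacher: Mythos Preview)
The paper does not prove this theorem; it is quoted without proof from the unpublished Musiker--Yu manuscript, so there is no argument in the paper to compare yours against. Note also that the paper's own method (the free transitive $GL_k\times GL_{k+1}$ action from Theorem~\ref{main}) does not apply here, since $2\times2\times2$ is not a boundary format.

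Your argument is correct. The identification of the Cayley hyperdeterminant with the discriminant of $\det(\lambda M_0+\mu M_1)$ is classical, and your stratification by the Galois type of the root pair is sound. The two fiber counts check out: in the split case, writing $M_i=u_iv_i^{\top}$ and observing that $\det(\lambda M_0+\mu M_1)=\det(u_0\,|\,u_1)\det(v_0\,|\,v_1)\,\lambda\mu$ shows that the cross term is nonzero exactly when both pairs are independent, giving $|GL_2|^2/(q-1)^2=q^2(q-1)^2(q+1)^2$; in the non-split case, the pair $(M_0,M_0^{-1}M_1)$ ranges over $GL_2$ times a conjugacy class of size $|GL_2|/(q^2-1)=q(q-1)$, giving $q^2(q-1)^3(q+1)$. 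The assembly
\[
\tfrac{q(q+1)}{2}\,q^2(q-1)^2(q+1)^2+\tfrac{q(q-1)}{2}\,q^2(q-1)^3(q+1)=\tfrac{q^3(q-1)^2(q+1)}{2}\bigl[(q+1)^2+(q-1)^2\bigr]=q^3(q-1)^2(q+1)(q^2+1)
\]
is exactly $q^3(q-1)^2[4]_q$.

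The ``obstacles'' you list are minor. Transitivity of $PGL_2(\mathbb{F}_q)$ on unordered conjugate pairs follows already from affine maps: if $\alpha,\beta\in\mathbb{F}_{q^2}\setminus\mathbb{F}_q$, write $\beta=e\alpha+c$ with $e\in\mathbb{F}_q^{\times}$, $c\in\mathbb{F}_q$ (possible since $\{1,\alpha\}$ is an $\mathbb{F}_q$-basis). Disjointness of the two strata is automatic, since a Galois-stable pair of distinct points is either entirely rational or entirely non-rational. One small point worth a sentence in a write-up: in characteristic~$2$ the discriminant and the Cayley hyperdeterminant both reduce to the square of the middle coefficient of $\det(\lambda M_0+\mu M_1)$, so the equivalence ``nondegenerate $\Leftrightarrow$ distinct roots'' persists and the count is unchanged.
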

The $2 \times 2 \times 3$ case was solved in unpublished work of Joel Lewis and Steven Sam (personal communication).
\begin{thmn}[Lewis-Sam]
The number of nondegenerate $2 \times 2 \times 3$ hypermatrices over $\mathbb{F}_q$ is $q^4(q-1)^4[2]^2_q[3]_q$.
\end{thmn}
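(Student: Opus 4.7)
The plan is to deduce the Lewis--Sam formula as an immediate consequence of Theorem \ref{main} applied to the case $k=2$. Since Theorem \ref{main} gives a free and transitive action of $G(\mathbb{F}_q) = (GL_k(\mathbb{F}_q) \times GL_{k+1}(\mathbb{F}_q))/N$ on $M_k(\mathbb{F}_q)$, orbit--stabilizer immediately gives
\[
|M_k(\mathbb{F}_q)| = |G(\mathbb{F}_q)| = \frac{|GL_k(\mathbb{F}_q)| \cdot |GL_{k+1}(\mathbb{F}_q)|}{|N(\mathbb{F}_q)|}.
\]
The normal subgroup $N$ consists of pairs $(cI_k, c^{-1}I_{k+1})$ with $c \in \mathbb{F}_q^\times$, so $|N(\mathbb{F}_q)| = q - 1$. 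Therefore the entire computation reduces to plugging in the standard formula $|GL_n(\mathbb{F}_q)| = q^{\binom{n}{2}}(q-1)^n[n]!_q$ and simplifying.

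For the Lewis--Sam case we specialize to $k=2$. Using $|GL_2(\mathbb{F}_q)| = q(q-1)^2[2]_q$ and $|GL_3(\mathbb{F}_q)| = q^3(q-1)^3[2]_q[3]_q$, I would write
\[
|M_2(\mathbb{F}_q)| = \frac{q(q-1)^2[2]_q \cdot q^3(q-1)^3[2]_q[3]_q}{q-1} = q^4(q-1)^4[2]_q^2[3]_q,
\]
which is exactly the claimed formula. The main obstacle is essentially nonexistent here: the algebraic bookkeeping is routine and the only step requiring any thought is confirming that $|N(\mathbb{F}_q)| = q-1$, which is immediate since $N$ is parameterized by $\mathbb{F}_q^\times$. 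I would conclude by remarking that the same argument recovers the Musiker--Yu count only after accounting for the boundary format restriction (the $2 \times 2 \times 2$ case is not of our format $2 \times k \times (k+1)$, so it lies outside the scope of Theorem \ref{main}), but that the identical computation for general $k$ yields the uniform formula $|M_k(\mathbb{F}_q)| = q^{k^2}(q-1)^{2k}[k]!_q [k+1]!_q / (q-1)$, generalizing Lewis--Sam to all $k$.
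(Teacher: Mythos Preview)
Your proof is correct and follows the same route as the paper: the paper's Proposition (immediately after the Lewis--Sam statement) proves the general formula $|M_k(\mathbb{F}_q)| = q^{k^2}(q-1)^{2k}[k]!_q[k+1]!_q$ by exactly the computation $|M_k| = |G| = |GL_k|\cdot|GL_{k+1}|/|\mathbb{F}_q^\times|$ that you carry out, and your $k=2$ specialization is the Lewis--Sam count.

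One small slip in your closing remark: the general formula you wrote carries an extra division by $(q-1)$. The correct simplification is $|M_k(\mathbb{F}_q)| = q^{k^2}(q-1)^{2k}[k]!_q[k+1]!_q$, since $\binom{k}{2}+\binom{k+1}{2}=k^2$ and the exponent of $(q-1)$ is $k+(k+1)-1=2k$; indeed your own $k=2$ answer $q^4(q-1)^4[2]_q^2[3]_q$ does not match your stated general formula when specialized.
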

Viewing $\mathbb{F}_q$ as a topological field with the discrete topology, Corollary \ref{homeo} above allows us to answer this question for general $2 \times k \times (k+1)$ hypermatrices, generalizing Lewis and Sam's result.
\begin{prop}
The number of nondegenerate $2 \times k \times (k+1)$ hypermatrices over $\mathbb{F}_q$ is $q^{k^2}(q-1)^{2k}[k]!_q[k+1]!_q$
\end{prop}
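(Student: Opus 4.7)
The plan is to use Corollary \ref{homeo} directly. Viewing $\mathbb{F}_q$ as a topological field under the discrete topology makes it Hausdorff, so the corollary yields a homeomorphism $G(\mathbb{F}_q) \to M_k(\mathbb{F}_q)$; in particular a bijection of finite sets. Counting $|M_k(\mathbb{F}_q)|$ therefore reduces to computing $|G(\mathbb{F}_q)|$.

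By the definition of $G$, I have
\[
|G(\mathbb{F}_q)| = \frac{|GL_k(\mathbb{F}_q)| \cdot |GL_{k+1}(\mathbb{F}_q)|}{|N(\mathbb{F}_q)|},
\]
where $N = \{(cI_k, c^{-1}I_{k+1}) : c \in \mathbb{F}_q^\times\}$ has order $q-1$. Substituting the standard formula $|GL_n(\mathbb{F}_q)| = q^{\binom{n}{2}}(q-1)^n[n]!_q$ for $n = k$ and $n = k+1$, the product in the numerator becomes $q^{\binom{k}{2}+\binom{k+1}{2}}(q-1)^{2k+1}[k]!_q[k+1]!_q$.

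The main (and only) thing to check is then the arithmetic identity $\binom{k}{2} + \binom{k+1}{2} = k^2$, which follows from $\tfrac{k(k-1)}{2} + \tfrac{k(k+1)}{2} = \tfrac{2k^2}{2}$. Dividing by $|N(\mathbb{F}_q)| = q-1$ drops one factor of $(q-1)$ and gives exactly $q^{k^2}(q-1)^{2k}[k]!_q[k+1]!_q$. There is no real obstacle beyond this routine simplification; the substance of the result is entirely concentrated in Theorem \ref{main} (and Corollary \ref{homeo}), which supply the bijection in the first place.
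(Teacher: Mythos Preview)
Your proof is correct and follows essentially the same approach as the paper: invoke Corollary \ref{homeo} to get a bijection $M_k(\mathbb{F}_q) \to G(\mathbb{F}_q)$, then compute $|G(\mathbb{F}_q)| = |GL_k(\mathbb{F}_q)|\cdot|GL_{k+1}(\mathbb{F}_q)|/|\mathbb{F}_q^\times|$ using the standard $GL_n$ count. You simply spell out the arithmetic (the identity $\binom{k}{2}+\binom{k+1}{2}=k^2$) that the paper leaves implicit.
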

\begin{proof}
By Corollary \ref{homeo}, there is a bijection $M_k(\mathbb{F}_q) \to G$. This implies:
\begin{align*}
|M_k(\mathbb{F}_q)| &= |G| \\
                    &= \dfrac{|GL_k|\cdot|GL_{k+1}|}{|\mathbb{F}_q^\times|}\\
										&= q^{k^2}(q-1)^{2k}[k]!_q[k+1]!_q.
\end{align*}

\end{proof}
\subsection{Topology}
In this section, we consider the topology of $M_k(\mathbb{F})$, where $\mathbb{F}$ is either $\mathbb{R}$ or $\mathbb{C}.$ The topological information is mostly contained in the following fiber bundle.
\begin{cor}
For $\mathbb{F} = \mathbb{R},\mathbb{C}$, $M_k(\mathbb{F})$ is the base space of a fiber bundle:
\[
\mathbb{F}^\times \to GL_k(\mathbb{F}) \times GL_{k+1}(\mathbb{F}) \to M_k(\mathbb{F})
\]
\end{cor}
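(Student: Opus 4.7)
The plan is to use Corollary \ref{homeo} to reduce the statement to a purely group-theoretic one, and then invoke the standard principal-bundle structure on a Lie-group quotient by a closed subgroup.

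First, I would observe that the map $GL_k(\mathbb{F}) \times GL_{k+1}(\mathbb{F}) \to M_k(\mathbb{F})$ in the proposed sequence is $(A, B) \mapsto (A, B) \cdot I_{k, k+1}$, and by the construction of $\phi$ in Corollary \ref{homeo} this factors as the quotient $q : GL_k(\mathbb{F}) \times GL_{k+1}(\mathbb{F}) \to G$ followed by the homeomorphism $\phi : G \to M_k(\mathbb{F})$. Since $\phi$ is a homeomorphism, it suffices to exhibit $q$ as a fiber bundle with fiber $\mathbb{F}^\times$.

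Next, I would identify the fibers. By the freeness of the action (Theorem \ref{main}), the fibers of $q$ are precisely the cosets of $N = \{(cI_k, c^{-1}I_{k+1}) : c \in \mathbb{F}^\times\}$, and the map $c \mapsto (cI_k, c^{-1}I_{k+1})$ is an isomorphism of topological groups $\mathbb{F}^\times \cong N$.

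For local triviality, the cleanest route is to invoke the classical theorem that if $H$ is a closed subgroup of a Lie group $L$, then $L \to L/H$ is a principal $H$-bundle; this applies directly when $\mathbb{F} = \mathbb{R}$ or $\mathbb{C}$, since $N$ is closed in $GL_k(\mathbb{F}) \times GL_{k+1}(\mathbb{F})$. If a more direct argument is wanted, one can write down explicit local sections: the set $\widetilde U_{ij} = \{(A, B) : a_{ij} \neq 0\}$ is $N$-invariant (the $N$-action rescales $A$), so it descends to an open set $U_{ij} \subset G$, and the assignment $[(A, B)] \mapsto (a_{ij}^{-1} A,\, a_{ij} B)$ is well-defined and continuous, giving a section of $q$ over $U_{ij}$. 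The $U_{ij}$ cover $G$ because invertibility of $A$ forces some $a_{ij}$ to be nonzero, and each section produces a trivialization $q^{-1}(U_{ij}) \cong U_{ij} \times \mathbb{F}^\times$.

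The corollary is really just a repackaging of Theorem \ref{main} and Corollary \ref{homeo} in bundle-theoretic language, so there is no serious obstacle; the only point requiring any care is local triviality, which is dispatched by either of the arguments above.
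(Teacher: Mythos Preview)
Your proposal is correct and follows essentially the same route as the paper: reduce via Corollary~\ref{homeo} to the quotient map $GL_k(\mathbb{F}) \times GL_{k+1}(\mathbb{F}) \to G$, and then appeal to the fact that a Lie group modulo a closed subgroup is a principal bundle. The paper's proof is simply a terser version of your first argument; your additional explicit local sections over the sets $U_{ij}$ are a nice concrete supplement but are not needed for the corollary as stated.
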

\begin{proof}
By Corollary \ref{homeo}, $M_k(\mathbb{F}) \approx G$. The corresponding fiber bundle for $G$ comes from the exact sequence of Lie groups
\[
0 \to \langle(cI_k,c^{-1}I_{k+1})_{c \in \mathbb{F}^\times} \rangle \to GL_k(\mathbb{F}) \times GL_{k+1}(\mathbb{F}) \to G \to 0
\]
where the fiber is clearly closed and homeomorphic to $\mathbb{F}^\times$. 
\end{proof}
This directly gives us the homotopy groups of $M_k(\mathbb{C})$. 
\begin{cor}
 $M_k(\mathbb{C})$ is connected, and has homotopy groups as follows:
\[
\pi_n(M_k(\mathbb{C})) = \begin{cases} \mathbb{Z} &\text{if } n = 1 \\
\pi_n(GL_k(\mathbb{C})) \times \pi_n(GL_{k+1}(\mathbb{C})) &\text{if } n \geq 2											
													 \end{cases}
\]
\end{cor}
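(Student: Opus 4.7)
The plan is to apply the long exact sequence in homotopy to the fiber bundle from the preceding corollary, namely
\[
\mathbb{C}^\times \to GL_k(\mathbb{C}) \times GL_{k+1}(\mathbb{C}) \to M_k(\mathbb{C}).
\]
Since $\mathbb{C}^\times$ deformation retracts onto $S^1$, we have $\pi_1(\mathbb{C}^\times) = \mathbb{Z}$ and $\pi_n(\mathbb{C}^\times) = 0$ for $n \neq 1$. Connectedness of $M_k(\mathbb{C})$ is then immediate: the total space $GL_k(\mathbb{C}) \times GL_{k+1}(\mathbb{C})$ is path-connected, and any quotient of a path-connected space by a continuous group action is path-connected.

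For $n \geq 3$, both $\pi_n(\mathbb{C}^\times)$ and $\pi_{n-1}(\mathbb{C}^\times)$ vanish, so the long exact sequence yields
\[
\pi_n(M_k(\mathbb{C})) \cong \pi_n(GL_k(\mathbb{C}) \times GL_{k+1}(\mathbb{C})) \cong \pi_n(GL_k(\mathbb{C})) \times \pi_n(GL_{k+1}(\mathbb{C})).
\]
For $n = 2$, the classical fact that $\pi_2$ of any Lie group vanishes (applied after deformation retracting onto the maximal compact $U(k) \times U(k+1)$) makes both sides zero, so the claimed isomorphism again holds trivially.

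The remaining work is the $\pi_1$ computation, which I expect to be the main obstacle. The relevant portion of the sequence is
\[
0 = \pi_2(GL_k \times GL_{k+1}) \to \pi_2(M_k) \to \pi_1(\mathbb{C}^\times) \xrightarrow{\iota_*} \pi_1(GL_k \times GL_{k+1}) \to \pi_1(M_k) \to 0,
\]
so everything reduces to identifying $\iota_* \colon \mathbb{Z} \to \mathbb{Z} \times \mathbb{Z}$. The fiber inclusion is $c \mapsto (cI_k, c^{-1}I_{k+1})$; composing with the determinant maps, which identify $\pi_1(GL_k(\mathbb{C})) \times \pi_1(GL_{k+1}(\mathbb{C}))$ with $\pi_1(\mathbb{C}^\times) \times \pi_1(\mathbb{C}^\times) = \mathbb{Z}^2$, gives $c \mapsto (c^k, c^{-(k+1)})$. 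Hence $\iota_*(1) = (k, -(k+1))$. Injectivity of $\iota_*$ forces $\pi_2(M_k) = 0$, agreeing with the formula. Since $\gcd(k, k+1) = 1$, the vector $(k, -(k+1))$ is primitive in $\mathbb{Z}^2$, so the cokernel $\mathbb{Z}^2 / \langle (k, -(k+1)) \rangle$ is isomorphic to $\mathbb{Z}$, yielding $\pi_1(M_k(\mathbb{C})) = \mathbb{Z}$ as claimed.
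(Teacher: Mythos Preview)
Your argument is correct and follows the same route as the paper: apply the long exact sequence of the fibration $\mathbb{C}^\times \to GL_k(\mathbb{C}) \times GL_{k+1}(\mathbb{C}) \to M_k(\mathbb{C})$, use $\pi_n(\mathbb{C}^\times)=0$ for $n\neq 1$ to handle the higher groups, and compute the cokernel of $\iota_*$ for $\pi_1$. The one substantive difference is your identification of $\iota_*$: the paper asserts $\iota_*(1)=(1,1)$, whereas you track the fiber inclusion $c\mapsto(cI_k,c^{-1}I_{k+1})$ through the determinant isomorphisms $\pi_1(GL_n(\mathbb{C}))\cong\pi_1(\mathbb{C}^\times)$ and obtain $(k,-(k+1))$. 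Your computation is the correct one under the standard identification; fortunately both vectors are primitive in $\mathbb{Z}^2$, so the cokernel is $\mathbb{Z}$ either way. You are also more careful at $n=2$, explicitly using the injectivity of $\iota_*$ (and the vanishing of $\pi_2$ for Lie groups) where the paper's short exact sequence tacitly assumes $\pi_{n-1}(\mathbb{C}^\times)=0$.
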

\begin{proof}
This follows directly from the long exact sequence of a fibration applied to the above fiber bundle. In the $\pi_1$ case we obtain the exact sequence
\[
\pi_1(\mathbb{C}^\times) \to \pi_1(GL_k(\mathbb{C}) \times GL_{k+1}(\mathbb{C})) \to \pi_1(M_k(\mathbb{C})) \to 0
\]
which becomes
\[
\mathbb{Z} \to \mathbb{Z} \times \mathbb{Z} \to \pi_1(M_k(\mathbb{C})) \to 0
\]
where the first map takes $1$ to $(1,1)$. This implies $\pi_1(M_k(\mathbb{C})) = \mathbb{Z}.$ 	

Since $\pi_n(\mathbb{C}^\times) = 0$ for $n>1$, we obtain:
\[
0 \to \pi_n(GL_k(\mathbb{C}) \times GL_{k+1}(\mathbb{C})) \to \pi_n(M_k(\mathbb{C})) \to 0,
\]
which implies that $\pi_n(M_k(\mathbb{C})) = \pi_n(GL_k(\mathbb{C})) \times \pi_n(GL_{k+1}(\mathbb{C}))$. 
\end{proof}
Over $\mathbb{R}$ we can explicitly determine the homotopy type of $M_k$.
\begin{cor}
$M_k(\mathbb{R})$ is homotopy equivalent to two copies of ${SO(k) \times SO(k+1)}$
\end{cor}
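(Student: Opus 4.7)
The plan is to combine Corollary \ref{homeo} with the polar decomposition of $GL_n(\mathbb{R})$ and a careful component count. By Corollary \ref{homeo} it is enough to show $G \simeq SO(k) \times SO(k+1) \sqcup SO(k) \times SO(k+1)$. I would first observe that the polar decomposition gives a diffeomorphism
\[
GL_k(\mathbb{R}) \times GL_{k+1}(\mathbb{R}) \;\cong\; O(k) \times P_k \times O(k+1) \times P_{k+1},
\]
where $P_n$ denotes the (contractible, convex) cone of positive definite symmetric $n \times n$ matrices. The key computation is to track how the subgroup $N = \{(cI_k, c^{-1}I_{k+1}) : c \in \mathbb{R}^\times\}$ acts in these coordinates. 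Writing $c = \varepsilon t$ with $\varepsilon \in \{\pm 1\}$ and $t > 0$, the element $(cI_k, c^{-1}I_{k+1})$ acts as $(O_1, P_1, O_2, P_2) \mapsto (\varepsilon O_1, tP_1, \varepsilon O_2, t^{-1}P_2)$, so the action of $N \cong \{\pm 1\} \times \mathbb{R}_{>0}$ splits as a product of a diagonal sign action on $O(k) \times O(k+1)$ and a free scaling action on $P_k \times P_{k+1}$.

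From this splitting the quotient factors as
\[
G \;\cong\; \frac{O(k) \times O(k+1)}{\{\pm(I_k, I_{k+1})\}} \times \frac{P_k \times P_{k+1}}{\mathbb{R}_{>0}}.
\]
The second factor admits the explicit section $\{(P_1,P_2) : \det P_1 = 1\}$, which is contractible (it is a product of a noncompact symmetric space $SL_k(\mathbb{R})/SO(k)$ with the convex cone $P_{k+1}$), so this factor is homotopy equivalent to a point. The whole problem thus reduces to identifying the homotopy type of $(O(k) \times O(k+1))/\{\pm(I_k, I_{k+1})\}$.

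The remaining step is a component count. The space $O(k) \times O(k+1)$ has four connected components indexed by $(\det O_1, \det O_2) \in \{\pm 1\}^2$, and each is homeomorphic to $SO(k) \times SO(k+1)$. The involution $(O_1, O_2) \mapsto (-O_1, -O_2)$ acts on these signs by $(\det O_1, \det O_2) \mapsto ((-1)^k \det O_1, (-1)^{k+1}\det O_2)$; in both parities of $k$ this exchanges the four components in two pairs of two, and Theorem~\ref{main} guarantees the action is free (so no fixed components can arise). Hence the quotient has exactly two components, each homeomorphic to one of the original four components, i.e., to $SO(k) \times SO(k+1)$. The main thing to check carefully is the case analysis by the parity of $k$, but in both parities the outcome is the same, completing the proof.
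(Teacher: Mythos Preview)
Your argument is correct and takes a genuinely different route from the paper's. The paper first uses the hyperdeterminant (together with the Dionisi--Ottaviani transformation formula) to retract $M_k(\mathbb{R})$ onto the locus $\{\operatorname{Det}=\pm 1\}$ and split it into two homeomorphic pieces; then it picks, in each $N$-coset, the unique representative with $\det(x)=1$, identifying a component with $SL_k(\mathbb{R})\times SL_{k+1}(\mathbb{R})\simeq SO(k)\times SO(k+1)$. By contrast you never touch the hyperdeterminant: you work entirely inside $G$, using the polar decomposition to factor off the contractible positive-definite part and reduce to the finite quotient $(O(k)\times O(k+1))/\{\pm(I,I)\}$, where a parity-of-$k$ component count finishes things. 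Your approach is a bit more self-contained (it avoids the forward reference to Dionisi--Ottaviani) and makes the ``two copies'' appear transparently from the orthogonal-group components; the paper's approach has the advantage of explaining \emph{which} invariant (namely the sign of $\operatorname{Det}$) labels the two components. One small remark: your appeal to Theorem~\ref{main} for freeness is unnecessary---since the involution on $O(k)\times O(k+1)$ swaps distinct connected components, it is automatically fixed-point free, so the quotient of each pair of components is simply one copy of $SO(k)\times SO(k+1)$.
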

\begin{proof}
Exactly one of $k, k+1$ is odd. We will assume $k$ is odd; the proof of the other case is similar.

We note that $M_k(\mathbb{R})$ deformation retracts onto the space of hypermatrices with hyperdeterminant $\pm 1$. Since the hyperdeterminant is a polynomial in the entries of the hypermatrix and therefore continuous, the set with hyperdeterminant $1$ and the set with hyperdeterminant $-1$ are separated and moreover homeomorphic.

Now, consider the set with hyperdeterminant $1$. As a subspace of $G$, this is the space of pairs $(x,y) \in GL_k(\mathbb{R}) \times GL_{k+1}(\mathbb{R})$ such that\footnote{See the theorem of Dionisi-Ottaviani in the next section} $\det(x)^{k+1}\det(y)^{k} = 1,$ modulo the equivalence relation of multiplying $x$ and $y$ by $c$ and $c^{-1}$ for some $c \in \mathbb{R}$. Each equivalence class has a unique member with $\det(x) = 1$, which implies that $\det(y) = 1$ as well. But this is just $SL_k(\mathbb{R}) \times SL_{k+1}(\mathbb{R}),$ which is homotopy equivalent to $SO(k) \times SO(k+1)$, as desired. 
\end{proof}
\subsection{Explicit Hyperdeterminant Formulas}
A fair amount of recent research is focused on explicitly computing hyperdeterminants--see, for example, \cite{Bremner}. A general formula in terms of determinants of larger matrix can be found in Theorem 14.3.7 of \cite{GKZ}. In the $2 \times k \times (k+1)$ case, this matrix is square of order $k^2 - k.$ Using fast matrix multiplication, this can be computed in $O(k^{4.746})$ time and $O(k^4)$ space, although there may be more efficient methods available due to the sparsity of the matrices. Here, we provide an algorithm which requires $O(k^4)$ time and $O(k^2)$ space.  

We make use of the following lemma and theorem\footnote{Each of these is true in more generality, but we only require the special cases given here.}:
\begin{lemn}[Gelfand-Kapranov-Zelevinsky  \cite{GKZ}, Lemma 14.3.4]
$\operatorname{Det}(I_{k,k+1}) = 1$. 
\end{lemn}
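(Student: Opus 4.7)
The plan is to evaluate $\operatorname{Det}$ at $I_{k,k+1}$ using an explicit formula for the boundary-format hyperdeterminant and then exploit the extreme sparsity of $I_{k,k+1}$. My preferred route is via the characterization (implicit in Chapter 14 of \cite{GKZ}) that for slices $A, B$ of a $2 \times k \times (k+1)$ hypermatrix, $\operatorname{Det}$ is, up to a universal nonzero constant fixed by the standard normalization, the Chow/Macaulay-style resultant of the $k+1$ maximal $k \times k$ minors of the $(k+1) \times k$ pencil $sA + tB$. This characterization is consistent with Lemma \ref{degen}, since such a resultant vanishes exactly when the maximal minors have a nonzero common root, i.e., when some $sA + tB$ fails to have full rank.

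For $I_{k,k+1}$ the pencil $sA + tB$ is bidiagonal with $s$ on the main diagonal and $t$ on the subdiagonal. Expanding each $k \times k$ minor by deleting one of the $k+1$ rows produces a block-triangular matrix: deleting row $i$ gives an upper-left $i \times i$ block that is lower triangular with $s$'s on the diagonal and a lower-right $(k-i) \times (k-i)$ block that is upper triangular with $t$'s on the diagonal, the two blocks decoupled. Hence the minor is $\pm s^i t^{k-i}$, and the full $(k+1)$-tuple of minors is, up to signs, precisely the monomial basis $\{s^i t^{k-i}\}_{i=0}^{k}$ of degree-$k$ binary forms. The associated Macaulay resultant matrix is therefore a signed permutation matrix, so $\operatorname{Det}(I_{k,k+1}) = \pm 1$.

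It remains only to pin down the sign, which I would do by induction on $k$ with base case $k = 1$: here $I_{1,2}$ is nothing but the $2 \times 2$ identity matrix viewed as a $2 \times 1 \times 2$ hypermatrix, so $\operatorname{Det}(I_{1,2}) = \det(I_2) = 1$. The inductive step uses the fact that the block decomposition of the minor computation in the preceding paragraph cleanly separates an $I_{k-1,k}$ sub-pencil from a unit factor, so $\operatorname{Det}(I_{k,k+1}) = \operatorname{Det}(I_{k-1,k})$. The main obstacle is purely bookkeeping---matching the GKZ sign and normalization conventions to the naive monomial-resultant construction---but the substantive content, namely that $I_{k,k+1}$ manufactures precisely the monomial basis on which the Macaulay resultant is tautologically $\pm 1$, is immediate.
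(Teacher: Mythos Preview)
The paper does not prove this lemma; it merely quotes it from \cite{GKZ} (Lemma 14.3.4) without argument, so there is no in-paper proof to compare against.

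On its own merits: your approach does correctly establish $\operatorname{Det}(I_{k,k+1}) = \pm 1$. The maximal $k\times k$ minors of the bidiagonal pencil $sA+tB$ are indeed the monomials $s^i t^{k-i}$ up to sign, so the coefficient matrix is a signed permutation matrix, and matching degrees and vanishing loci against the irreducible polynomial $\operatorname{Det}$ forces your determinant-of-coefficients construction to equal $\pm\operatorname{Det}$.

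The gap is in the sign. Your inductive step claims $\operatorname{Det}(I_{k,k+1}) = \operatorname{Det}(I_{k-1,k})$ from the block structure of the minor computation, but that block structure is a statement about your construction $D_k$, not about $\operatorname{Det}$ directly. You only know $D_k = c_k\,\operatorname{Det}$ with $c_k \in \{\pm 1\}$, and $c_k$ depends on conventions you have not fixed (signs attached to the minors, ordering of the monomial basis, and GKZ's own normalization of $\operatorname{Det}$). A recursion $D_k(I_{k,k+1}) = D_{k-1}(I_{k-1,k})$ does not transfer to $\operatorname{Det}$ unless you also show $c_k = c_{k-1}$, which requires exactly the normalization-matching you defer as ``bookkeeping.'' Since $\operatorname{Det}(I_{k,k+1}) = \pm 1$ is nearly automatic from irreducibility and degree considerations, the $+$ sign \emph{is} the entire content of the lemma, and your sketch does not actually pin it down.
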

\begin{thmn}[Dionisi-Ottaviani \cite{DO}]\label{DOT}
Let $A \in GL_{k}, B \in GL_{k+1},$ and $M \in M_k$. Then
\[
\operatorname{Det}( (A,B) \cdot M) = \det(A)^{k+1}\det(B)^{k}\operatorname{Det}(M)
\]
\end{thmn}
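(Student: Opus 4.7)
The plan is to combine the defining relative $GL$-invariance of the hyperdeterminant with the triviality of the subgroup $N$ appearing in Theorem \ref{main}, reducing everything to a single degree computation.

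By the definition of the hyperdeterminant, there automatically exist integers $l_k$ and $l_{k+1}$, depending only on the format, such that
\[
\operatorname{Det}((A, B) \cdot M) = \det(A)^{l_k}\det(B)^{l_{k+1}}\operatorname{Det}(M)
\]
for every $(A, B) \in GL_k \times GL_{k+1}$ and every $M$. So the theorem reduces to identifying $l_k = k+1$ and $l_{k+1} = k$.

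Next, I would insert $(A, B) = (cI_k, c^{-1}I_{k+1})$. In either standard convention the resulting action sends each slice $M_i$ to $cI_k \cdot M_i \cdot c^{-1}I_{k+1} = M_i$, so the hypermatrix is unchanged; this is exactly why it was natural to quotient $GL_k \times GL_{k+1}$ by $N$ in the definition of $G$. Substituting into the invariance relation gives $c^{kl_k - (k+1)l_{k+1}} = 1$ for every $c \in \mathbb{F}^\times$, hence $kl_k = (k+1)l_{k+1}$. Since $\gcd(k, k+1) = 1$, we conclude that $(l_k, l_{k+1}) = m \cdot (k+1, k)$ for some positive integer $m$.

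Finally, I would show $m = 1$ by computing the total degree. Applying $(cI_k, I_{k+1})$ scales every entry of $M$ by $c$ and hence scales $\operatorname{Det}$ by $c^{kl_k} = c^{mk(k+1)}$, so the total degree of $\operatorname{Det}$ in the $2k(k+1)$ entries of $M$ is $mk(k+1)$. It therefore suffices to show that this total degree equals $k(k+1)$. This is the main obstacle, and does not follow from the Main Theorem of this paper: it must be supplied externally, for instance from the GKZ degree formula (Section 14.2 of \cite{GKZ}), which for boundary format $2 \times k \times (k+1)$ specializes to $k(k+1)$, or alternatively from a Schläfli-style realization of $\operatorname{Det}$ as an appropriate resultant in this format. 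With this one numerical input the theorem follows.
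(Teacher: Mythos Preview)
The paper does not prove this statement at all: it is quoted verbatim as a theorem of Dionisi--Ottaviani \cite{DO} (and, implicitly, of \cite{GKZ}) and then used as a black box in the hyperdeterminant algorithm. So there is no ``paper's own proof'' to compare against.

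Your argument is a correct reduction. The relative invariance is built into Definition~1.1, and your use of the pair $(cI_k,c^{-1}I_{k+1})\in N$ to force $kl_k=(k+1)l_{k+1}$ is exactly the reason the paper passes to the quotient $G$ in the first place. The only substantive step left is pinning down $m=1$, i.e.\ knowing the total degree of $\operatorname{Det}$ in this format, and you are right that nothing proved in this paper supplies that: it really does require the GKZ degree computation (or the resultant description of the boundary-format hyperdeterminant). Since you flag this dependence explicitly and point to the right source, the sketch is sound; just be aware that you have not made the theorem self-contained, you have traded one citation (\cite{DO}) for another (\cite{GKZ}, \S14.2).
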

Since any nondegenerate $2 \times 2 \times (k+1)$ hypermatrix can be reduced to $I_{k,k+1}$ using the $GL_k \times GL_{k+1}$ action, these suffice to compute the hyperdeterminant of an arbitrary $2 \times k \times (k+1)$ hypermatrix:
\begin{prop}
Let $M$ be a $2 \times k \times (k+1)$ hypermatrix over some field $\mathbb{F}$. Then the hyperdeterminant of $M$ can be computed in $O(k^4)$ arithmetic operations.
\end{prop}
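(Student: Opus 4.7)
The plan is to run Algorithm 1 on $M$ (preceded by standard Gaussian elimination of its first slice) while carrying along two running scalars $d, e \in \mathbb{F}^\times$ recording $\det(A)$ and $\det(B)$ for the cumulative transformation $(A,B) \in GL_k(\mathbb{F}) \times GL_{k+1}(\mathbb{F})$ that has been applied. If at any point the algorithm reports degeneracy, or if the first slice already fails to have full rank (which by Lemma \ref{degen} applied with one $c_i=0$ forces $M$ to be degenerate), we return $0$. Otherwise the algorithm terminates with $(A,B)\cdot M = I_{k,k+1}$, and combining the Gelfand--Kapranov--Zelevinsky lemma with the Dionisi--Ottaviani theorem gives
\[
1 = \operatorname{Det}(I_{k,k+1}) = \det(A)^{k+1}\det(B)^{k}\operatorname{Det}(M) = d^{k+1}e^{k}\operatorname{Det}(M),
\]
so we return $\operatorname{Det}(M) = d^{-(k+1)}e^{-k}$.

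The updates to $d$ and $e$ are purely local: every elementary operation used in Algorithm 1 or in the preparatory row reduction is of one of three types, each affecting exactly one of $d$ or $e$. Adding a multiple of one row (or column) of a slice to a different row (or column) has determinant $1$ and changes neither; swapping two rows or columns of a slice multiplies the relevant one of $d$, $e$ by $-1$; and scaling a row or column by $c \in \mathbb{F}^\times$ multiplies it by $c$. Maintaining $d$ and $e$ therefore adds only $O(1)$ overhead per elementary operation, and $A$ and $B$ themselves are never assembled.

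For the time bound, Gaussian elimination on the first slice uses $O(k^3)$ arithmetic operations. The outer \emph{while} loop of Algorithm 1 runs $O(k)$ times; within one iteration the scaling step rescales $O(k)$ rows and columns of length $O(k)$ at cost $O(k^2)$, the ``clear the rest of the row'' block performs $O(k)$ additions of vectors of length $O(k)$ for another $O(k^2)$, and the ``clear the rest of the column'' block's doubly nested loop ranges over $O(k^2)$ index pairs with $O(k)$ arithmetic per pair, contributing $O(k^3)$. Summing over outer iterations gives $O(k^4)$ overall, and the final exponentiations $d^{-(k+1)}$ and $e^{-k}$ cost only $O(\log k)$ multiplications, which is absorbed. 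The conceptual work has already been done in Theorem \ref{main} and Lemma \ref{mainlem}; the only remaining content is the bookkeeping verification that each of the four move types illustrated in the worked example decomposes into elementary operations with both the claimed cost and the claimed effect on $d$ and $e$, which is routine and constitutes the only step requiring actual care.
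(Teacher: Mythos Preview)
Your proposal is correct and follows essentially the same approach as the paper: run Algorithm~1 (after Gaussian elimination on the first slice) while tracking the determinants of the accumulated row and column operations, then invoke the Gelfand--Kapranov--Zelevinsky lemma and the Dionisi--Ottaviani theorem to recover $\operatorname{Det}(M)$. The paper's own proof is a two-sentence sketch that omits both the complexity count and the degenerate case, so your version is strictly more detailed but not methodologically different.
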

\begin{proof}
The previous lemma and theorem imply that if $M = (x,y)I_{k,k+1}
$ for $(x,y) \in GL_k \times GL_{k+1}$, then $\operatorname{Det}(M) = \det(x)^{k+1}\det(y)^k.$ Using Algorithm $1$ and keeping track of the determinants of each row or column operation performed suffices to compute $\operatorname{Det}(M)$.
\end{proof}
In theory, this gives a method for finding the explicit form of the hyperdeterminant: one simply needs to perform Gaussian elimination on a hypermatrix of indeterminates and record the hyperdeterminant. In practice this is a little more difficult, because the rational functions arising as intermediate terms can  become quite large. Nevertheless, we obtain the following two cases:
\begin{thm}[Bremner]
The hyperdeterminant of a $2 \times 2 \times 3$ hypermatrix is the polynomial given in \cite{Bremner}.
\end{thm}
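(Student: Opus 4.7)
The plan is to turn the schematic description of ``apply Algorithm 1 symbolically'' into an actual verification in the $k=2$ case. Let $M$ be the generic $2 \times 2 \times 3$ hypermatrix whose twelve entries $a_{ijk}$ are indeterminates over a ground field (so we work in the fraction field of $\mathbb{Z}[a_{ijk}]$). First I would apply ordinary Gaussian elimination to the first $2 \times 3$ slice to reduce it to $(I_2 \mid 0)$, recording the corresponding element $(A_0, B_0) \in GL_2 \times GL_3$ along the way. I would then apply Algorithm 1 to the resulting hypermatrix, step by step, recording each elementary column or row operation as a factor of a matrix $A_i \in GL_2$ or $B_i \in GL_3$. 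At each step I track only $\det A_i$ and $\det B_i$, since these are all the theorem of Dionisi--Ottaviani requires.

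Once the algorithm terminates at $I_{2,3}$, the composition $(A,B) := \prod_i (A_i,B_i)$ satisfies $(A,B)\cdot M = I_{2,3}$, so by the lemma of GKZ and Theorem \ref{DOT} we get
\[
\operatorname{Det}(M) \;=\; \det(A)^{-3}\det(B)^{-2}.
\]
A priori this is a rational function in the $a_{ijk}$, but the hyperdeterminant is known to be a polynomial, so after clearing denominators and cancelling extraneous factors one obtains a polynomial expression. The final step is to compare this polynomial, entry by entry, with the explicit form listed in \cite{Bremner}. Because both are irreducible polynomials of the same bidegree which agree after the reduction, they must differ by a sign, and the sign can be pinned down by checking a single evaluation (e.g.\ specializing $M$ to $I_{2,3}$, where both sides equal $1$).

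The main obstacle is purely computational: the rational functions appearing after the first ``make diagonal elements $1$'' step already involve denominators that are themselves polynomials of moderate degree, and the ``chase down the diagonal'' moves compound these denominators several times. This is exactly the phenomenon the excerpt warns about when it says the intermediate terms ``can become quite large.'' In practice one carries out the computation in a computer algebra system, keeping $A$ and $B$ as explicit $2 \times 2$ and $3 \times 3$ matrices over $\mathbb{Q}(a_{ijk})$ and only simplifying $\det(A)^{-3}\det(B)^{-2}$ at the end; the guarantee that the result is polynomial lets one use this polynomiality as a sanity check on the symbolic bookkeeping. No genuinely new mathematical content beyond Theorem \ref{main} and Theorem \ref{DOT} is needed—the proof is a verification that the reduction algorithm, when specialized to $k=2$, reproduces Bremner's known formula.
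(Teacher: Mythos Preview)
Your proposal is correct and is essentially the same approach the paper takes: the paper's entire proof for this theorem (and the $2\times 3\times 4$ case that follows) is ``Algorithm 1, implemented in Sage,'' and you have simply spelled out what that computation entails. The only difference is that you supply more detail about the bookkeeping (tracking $\det A$, $\det B$, clearing denominators, pinning down the sign), whereas the paper leaves all of this to the computer algebra system.
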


\begin{thm}
Let $M$ be the $2 \times 3 \times 4$ hypermatrix given by
\[
\begin{pmatrix}
1 & 0 & 0 \\
0 & 1 & 0 \\
0 & 0 & 1 \\
0 & 0 & 0 \\
\end{pmatrix}
\begin{pmatrix}
b_{00} & b_{01} & b_{02} \\
b_{10} & b_{11} & b_{12} \\
b_{20} & b_{21} & b_{22} \\
b_{30} & b_{31} & b_{32}  
\end{pmatrix}.
\]
Then,
\begin{align*}
\operatorname{Det}(M) &= b_{32}b_{31}b_{30}b_{22}b_{21}b_{12} - b_{32}^2b_{30}b_{21}^2b_{12} - b_{32}b_{31}^2b_{22}b_{20}b_{12} + b_{32}^2b_{31}b_{21}b_{20}b_{12} \\ &+ b_{31}^2b_{30}b_{21}b_{12}^2 - b_{31}^3b_{20}b_{12}^2 - b_{32}b_{31}b_{30}b_{22}^2b_{11}  + b_{32}^2b_{30}b_{22}b_{21}b_{11} \\&+ b_{32}^2b_{31}b_{22}b_{20}b_{11} - b_{32}^3b_{21}b_{20}b_{11} - b_{31}^2b_{30}b_{22}b_{12}b_{11} - b_{32}b_{31}b_{30}b_{21}b_{12}b_{11} \\&+ 2b_{32}b_{31}^2b_{20}b_{12}b_{11} + b_{32}b_{31}b_{30}b_{22}b_{11}^2 - b_{32}^2b_{31}b_{20}b_{11}^2 + b_{32}b_{31}^2b_{22}^2b_{10} \\&- 2b_{32}^2b_{31}b_{22}b_{21}b_{10} + b_{32}^3b_{21}^2b_{10} + b_{31}^3b_{22}b_{12}b_{10} - b_{32}b_{31}^2b_{21}b_{12}b_{10} \\&- b_{32}b_{31}^2b_{22}b_{11}b_{10} + b_{32}^2b_{31}b_{21}b_{11}b_{10} + b_{32}b_{30}^2b_{22}b_{21}b_{02} - b_{32}b_{31}b_{30}b_{22}b_{20}b_{02} \\&- b_{32}^2b_{30}b_{21}b_{20}b_{02} + b_{32}^2b_{31}b_{20}^2b_{02} + 2b_{31}b_{30}^2b_{21}b_{12}b_{02} - 2b_{31}^2b_{30}b_{20}b_{12}b_{02} \\&- b_{31}b_{30}^2b_{22}b_{11}b_{02} + b_{32}b_{30}^2b_{21}b_{11}b_{02} + b_{31}b_{30}^2b_{11}^2b_{02} + b_{31}^2b_{30}b_{22}b_{10}b_{02} \\&- 3b_{32}b_{31}b_{30}b_{21}b_{10}b_{02} + 2b_{32}b_{31}^2b_{20}b_{10}b_{02} - 2b_{31}^2b_{30}b_{11}b_{10}b_{02} + b_{31}^3b_{10}^2b_{02} \\&+ b_{30}^3b_{21}b_{02}^2 - b_{31}b_{30}^2b_{20}b_{02}^2 - b_{32}b_{30}^2b_{22}^2b_{01} + 2b_{32}^2b_{30}b_{22}b_{20}b_{01} \\&- b_{32}^3b_{20}^2b_{01} - b_{31}b_{30}^2b_{22}b_{12}b_{01} - 2b_{32}b_{30}^2b_{21}b_{12}b_{01} + 3b_{32}b_{31}b_{30}b_{20}b_{12}b_{01} \\&+ b_{32}b_{30}^2b_{22}b_{11}b_{01} - b_{32}^2b_{30}b_{20}b_{11}b_{01} - b_{31}b_{30}^2b_{12}b_{11}b_{01} + 2b_{32}^2b_{30}b_{21}b_{10}b_{01} \\&- 2b_{32}^2b_{31}b_{20}b_{10}b_{01} + b_{31}^2b_{30}b_{12}b_{10}b_{01} + b_{32}b_{31}b_{30}b_{11}b_{10}b_{01} - b_{32}b_{31}^2b_{10}^2b_{01} \\&- b_{30}^3b_{22}b_{02}b_{01} + b_{32}b_{30}^2b_{20}b_{02}b_{01} + b_{30}^3b_{11}b_{02}b_{01} - b_{31}b_{30}^2b_{10}b_{02}b_{01} \\&- b_{30}^3b_{12}b_{01}^2 + b_{32}b_{30}^2b_{10}b_{01}^2 + b_{32}b_{31}b_{30}b_{22}^2b_{00} - b_{32}^2b_{30}b_{22}b_{21}b_{00} \\&- b_{32}^2b_{31}b_{22}b_{20}b_{00} + b_{32}^3b_{21}b_{20}b_{00} + b_{31}^2b_{30}b_{22}b_{12}b_{00} - b_{32}b_{31}^2b_{20}b_{12}b_{00} \\&- b_{32}^2b_{30}b_{21}b_{11}b_{00} + b_{32}^2b_{31}b_{20}b_{11}b_{00} + b_{31}^2b_{30}b_{12}b_{11}b_{00} - b_{32}b_{31}b_{30}b_{11}^2b_{00} \\&- b_{32}b_{31}^2b_{22}b_{10}b_{00} + b_{32}^2b_{31}b_{21}b_{10}b_{00} - b_{31}^3b_{12}b_{10}b_{00} + b_{32}b_{31}^2b_{11}b_{10}b_{00} \\&+ b_{31}b_{30}^2b_{22}b_{02}b_{00} - 2b_{32}b_{30}^2b_{21}b_{02}b_{00} + b_{32}b_{31}b_{30}b_{20}b_{02}b_{00} - b_{31}b_{30}^2b_{11}b_{02}b_{00} \\&+ b_{31}^2b_{30}b_{10}b_{02}b_{00} + b_{32}b_{30}^2b_{22}b_{01}b_{00} - b_{32}^2b_{30}b_{20}b_{01}b_{00} + 2b_{31}b_{30}^2b_{12}b_{01}b_{00} \\&- b_{32}b_{30}^2b_{11}b_{01}b_{00} - b_{32}b_{31}b_{30}b_{10}b_{01}b_{00} - b_{32}b_{31}b_{30}b_{22}b_{00}^2 + b_{32}^2b_{30}b_{21}b_{00}^2 \\&- b_{31}^2b_{30}b_{12}b_{00}^2 + b_{32}b_{31}b_{30}b_{11}b_{00}^2
\end{align*}
\end{thm}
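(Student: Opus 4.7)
The plan is to run Algorithm~$1$ symbolically on $M$ over the rational function field $\mathbb{F}(b_{ij})$, record the resulting $(A, B) \in GL_3 \times GL_4$ satisfying $(A, B) \cdot M = I_{3,4}$, and invert via Theorem~\ref{DOT} together with the lemma $\operatorname{Det}(I_{3,4}) = 1$ to obtain
\[
\operatorname{Det}(M) \;=\; \det(A)^{-4}\, \det(B)^{-3}.
\]
Since the first slice of $M$ is already in the form demanded by Lemma~\ref{mainlem} and the $b_{ij}$ are generic, no zero-pivot branch of the algorithm is triggered; it produces well-defined $A, B$, and the resulting polynomial identity extends to every specialization by Zariski density.

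Among the operations in Algorithm~$1$, swaps do not occur in the generic case (all pivots are nonzero) and add-a-multiple steps preserve determinants, so only the scaling operations contribute to $\det(A)$ and $\det(B)$. Letting $c_j$ denote the pivot $a^{\mathrm{current}}_{1,\,j+1,\,j}$ at the start of outer iteration $j$, a direct count of the scalings for $k=3$ and $j \in \{2,1,0\}$ yields
\[
\det(A) \;=\; c_0^{\,2}\, c_1, \qquad \det(B) \;=\; (c_0^{\,3}\, c_1^{\,2}\, c_2)^{-1},
\]
and hence $\operatorname{Det}(M) = c_0\, c_1^{\,2}\, c_2^{\,3}$. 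Here $c_2 = b_{32}$, while $c_1$ and $c_0$ are the successive Schur-complement-type pivots emerging from the clearing passes at $j = 2$ and $j = 1$.

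The main obstacle is expression swell: after the $j=2$ pass the entries of the second slice acquire $b_{32}$ denominators, after the $j=1$ pass the denominators involve a nontrivial polynomial in the $b_{ij}$, and by $j=0$ the rational functions are unwieldy by hand. I would therefore carry out the symbolic reduction in a computer algebra system, simplify $c_0\, c_1^{\,2}\, c_2^{\,3}$---whose denominators must cancel since $\operatorname{Det}(M)$ is a polynomial---and check agreement with the displayed degree-$6$ expression. An independent cross-check is available: both sides are polynomials of the same known multidegree in the $b_{ij}$, so evaluating on sufficiently many random specializations (each done numerically via Algorithm~$1$, where no expression swell occurs) certifies the identity.
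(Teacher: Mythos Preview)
Your approach is exactly the paper's: run Algorithm~1 symbolically in a computer algebra system and read off $\operatorname{Det}(M)$ from the accumulated row/column scalings via the Dionisi--Ottaviani relation and $\operatorname{Det}(I_{3,4})=1$. Your additional bookkeeping identity $\operatorname{Det}(M)=c_0\,c_1^{2}\,c_2^{3}$ (from counting the $k-j$ row scalings by $1/c_j$ and $k-1-j$ column scalings by $c_j$) is a pleasant refinement the paper does not state, but the method is the same.
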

\begin{proof}
Algorithm $1$, implemented in Sage\cite{sage}. 
\end{proof}
\begin{rem}
We can make similar computations for the $2 \times 3 \times 4$ case without reduced first slice, and the  $2 \times 4 \times 5$ case. The general $2 \times 3 \times 4$ hyperdeterminant has more than $100000$ monomials, and the reduced $2 \times 4 \times 5$ hyperdeterminant has $11912$ monomials with coefficients drawn from $\{\pm 1, \pm 2, \cdots, \pm 8 \}$. We will not print either here.
\end{rem}
\section{Some Open Questions}
We end with a few questions we believe to be open.
\begin{enumerate}
\item It is easy to see that the nondegenerate $3 \times k \times (k+2)$ hypermatrices over $\mathbb{C}$ do not lie in any finite number of orbits. Is there a natural larger group acting on them which rectifies this situation?
\item Is there a simple formula counting nondegenerate $3 \times k \times (k+2)$ hypermatrices? How about for larger formats?
\item Is there some combinatorial interpretation for the terms of the hyperdeterminant, similar to the interpretation of the ordinary determinant in terms of signed permutations?
\end{enumerate}
\section{Acknowledgments}
This research was part of the 2015 summer REU program at the University of Minnesota, Twin Cities, and was supported by RTG grant NSF/DMS-1148634. I would like to thank Joel Lewis and Elise DelMas for their mentorship and valuable advice and comments.

\bibliography{Hyperdeterminants
}{}
\bibliographystyle{plain}

\end{document}